\providecommand{\U}[1]{\protect \rule{.1in}{.1in}}
\providecommand{\U}[1]{\protect \rule{.1in}{.1in}}
\newtheorem{theorem}{Theorem}
\newtheorem{remark}[theorem]{Remark}
\newenvironment{proof}[1][Proof]{\noindent \textbf{#1.} }{\hfill  \rule{0.5em}{0.5em}}
\begin{document}

\date{}
\title{Random matrix models of stochastic integral type for free infinitely divisible distributions}
\author{J. Armando Dom\'{\i}nguez Molina\thanks{jadguez@uas.uasnet.mx} \ and Alfonso
Rocha Arteaga\thanks{arteaga@uas.uasnet.mx}\\Facultad de Ciencias F\'{\i}sico-Matem\'{a}ticas\\Universidad Aut\'{o}noma de Sinaloa, M\'{e}xico}
\maketitle

\begin{abstract}
The Bercovici-Pata bijection maps the set of classical infinitely divisible
distributions to the set of free infinitely divisible distributions. The
purpose of this work is to study random matrix models for free infinitely
divisible distributions under this bijection. First, we find a specific form
of the polar decomposition for the L\'{e}vy measures of the random matrix
models considered in Benaych-Georges $\left[  6\right]  $ who introduced the
models through their measures. Second, random matrix models for free
infinitely divisible distributions are built consisting of infinitely
divisible matrix stochastic integrals whenever their corresponding classical
infinitely divisible distributions admit stochastic integral representations.
These random matrix models are realizations of random matrices given by
stochastic integrals with respect to matrix-valued L\'{e}vy processes.
Examples of these random matrix models for several classes of free infinitely
divisible distributions are given. In particular, it is shown that any free
selfdecomposable infinitely divisible distribution has a random matrix model
of Ornstein-Uhlenbeck type $\int_{0}^{\infty}e^{-t}d\Psi_{t}^{d}$, $d\geq1$,
where $\Psi_{t}^{d}$ is a $d\times d$ matrix-valued L\'{e}vy process
satisfying an $I_{\log}$ condition.

\end{abstract}

\section{Introduction}

An ensemble of random matrices is a sequence $\left(  M_{d}\right)  _{d\geq1}$
where $M_{d}$ is a $d\times d$ matrix whose entries are random variables. The
empirical spectral distribution of $M_{d}$ is the uniform distribution of its
spectrum $\lambda_{1},\lambda_{2},...,\lambda_{d}$, that is the (random)
probability measure $\mu_{M_{d}}$ defined as $\mu_{M_{d}}=\frac{1}{d}%
\sum_{i=1}^{d}\delta_{\lambda_{i}}$. A random matrix model for a probability
measure $\mu$ is an ensemble $\left(  M_{d}\right)  _{d\geq1}$ for which the
empirical spectral distribution $\mu_{M_{d}}$ converges weakly to $\mu$.

Bercovici and Pata \cite{BP} introduced a bijection $\Lambda$ from the set of
classical infinitely divisible distributions to the set of free infinitely
divisible distributions to study relations between classical and free
infinitely divisible aspects. Under this bijection Benaych-Georges \cite{BG}
and Cavanal-Duvillard \cite{CD} construct for any classical one-dimensional
infinitely divisible distribution $\mu$ a random matrix model for the
corresponding free infinitely divisible distribution $\Lambda \left(
\mu \right)  $. This include the Wigner and Marchenko-Pastur results, which
provide random matrix models of Gaussian Unitary Ensembles and Wishart random
matrices for the semicircle and Marchenko-Pastur distributions, respectively.

Specifically, it is shown in \cite{BG} that for any infinitely divisible
distribution $\mu$ on $\mathbb{R}$ there is a random matrix model of Hermitian
matrices $(M_{d})_{d\geq1}$ for $\Lambda(\mu)$. Moreover, for each $d\geq1$
the Fourier transform of $M_{d}$ is given by%
\begin{equation}
\mathbb{E}[\exp(i\mathrm{tr}(M_{d}A))]=\exp \left[  d\mathbb{E}_{u}%
\mathcal{C}_{\mu}\left(  \left \langle u,Au\right \rangle \right)  \right]
\text{ }A\text{ Hermitian,} \label{cumulantBG}%
\end{equation}
where $\mathcal{C}_{\mu}$ is the cumulant transform of $\mu$, $u$ is a
uniformly distributed column random vector in the unit sphere of
$\mathbb{C}^{d}$ and $\left \langle \cdot,\cdot \right \rangle $ is the usual
Hermitian product of $\mathbb{C}^{d}$.

Any $\mathbb{R}$-valued L\'{e}vy process $\left \{  X_{t}^{(\mu)}%
:t\geq0\right \}  $ with law $\mu$ at time $t=1$ has associated an $\mathbb{R}%
$-valued independently scattered random measure. The stochastic integral of a
real-valued function $h$ on $\left[  0,\infty \right)  $ with respect to
$X_{t}^{(\mu)}$ written as%
\begin{equation}
\int_{0}^{\infty}h(t)dX_{t}^{(\mu)}\text{,} \label{stochint}%
\end{equation}
is an $\mathbb{R}$-valued infinitely divisible random variable defined in the
sense of integrals of non random functions with respect to scattered random
measures, see Urbanik and Woyczynski \cite{UW} and Rajput\ and\ Rosi\'{n}ski
\cite{RR}; and Sato \cite{Sato2} for the $\mathbb{R}^{d}$ case. Several
important classes of infinitely divisible distributions having this stochastic
integral representation in law have been studied recently, see \cite{BNMS},
\cite{AM2}, \cite{AM}, \cite{AMR} and \cite{JRY}.

Let $I_{\log}(\mathbb{R})$ be the class infinitely divisible distributions
$\mu$ on $\mathbb{R}$ whose L\'{e}vy measures $\nu_{\mu}$ satisfy the
condition $\int_{\left \vert x\right \vert >2}\log \left \vert x\right \vert
\nu_{\mu}(dx)<\infty$. It is shown in Jurek and Vervaat \cite{JV}, Sato and
Yamazato \cite{SY} and Sato \cite{Sato1}, that the class of selfdecomposable
distributions on $\mathbb{R}$ is characterized by the stochastic integrals of
the form (\ref{stochint}) where $h(t)=e^{-t}$ and $\mu \in I_{\log}%
(\mathbb{R})$ in the following sense. For any $\mu \in I_{\log}(\mathbb{R})$
there exists a selfdecomposable distribution $\widetilde{\mu}$ such that%
\begin{equation}
\widetilde{\mu}=\mathcal{L}\left(  \int_{0}^{\infty}e^{-t}dX_{t}^{(\mu
)}\right)  \label{selfdecomprepresentation}%
\end{equation}
and vice versa, to any selfdecomposable distribution $\widetilde{\mu}$
corresponds a distribution $\mu$ in $I_{\log}(\mathbb{R})$ such that
(\ref{selfdecomprepresentation}) holds. This characterization of
selfdecomposable distributions as stochastic integrals is related to
Ornstein-Uhlenbeck type processes through the Langevin equation. The Langevin
equation $dY_{t}=dX_{t}^{(\mu)}-Y_{t}dt$ has stationary solution $\left \{
Y_{t}:t\geq0\right \}  $ if and only if $\mu \in I_{\log}(\mathbb{R})$. This
stationary solution $\left \{  Y_{t}\right \}  $ is unique and $\mathcal{L}%
\left(  Y_{t}\right)  =\mathcal{L}\left(  \int_{0}^{\infty}e^{-t}dX_{t}%
^{(\mu)}\right)  $ for all $t\geq0$. The process $\left \{  Y_{t}\right \}  $ is
called the stationary Ornstein-Uhlenbeck type process, see Sato \cite{Sato1}
and Rocha-Arteaga and Sato \cite{RAS}.

In this work we are concerned with random matrix models for free infinitely
divisible distributions corresponding to the image $\Lambda$ of classical
infinitely divisible distributions. We show that for every classical
one-dimensional infinitely divisible distribution representable as the
stochastic integral (\ref{stochint}) there exists a random matrix model for
the corresponding free infinitely divisible distribution, consisting of a
realization as matrix stochastic integral similar to (\ref{stochint}) with
respect to an appropriate matrix-valued L\'{e}vy process. In particular, the
free selfdecomposable distribution $\Lambda(\widetilde{\mu})$ corresponding to
the classical selfdecomposable distribution $\widetilde{\mu}$ with stochastic
integral representation (\ref{selfdecomprepresentation}) where $\mu \in
I_{\log}(\mathbb{R})$, has a realization as random matrix model of
Ornstein-Uhlenbeck type $\left(  \int_{0}^{\infty}e^{-t}d\Psi_{t}^{\mu
,d}\right)  _{d\geq1}$ where $\Psi_{t}^{\mu,d}$ is a $d\times d$ matrix-valued
L\'{e}vy process satisfying an $I_{\log}$-condition. Recently, P\'{e}rez-Abreu
and Sakuma \cite{PAS} studied random matrix models for free infinitely
divisible distributions as matrix stochastic integrals of Wiener-Gamma type.
They considered free infinitely divisible distributions corresponding to the
image $\Lambda$ of classical generalized Gamma convolutions distributions with
the so called Wiener-Gamma representation, a type of stochastic integral
representation (\ref{stochint}) with respect to the Gamma process.

This paper is organized as follows. In Section $2$ we find the polar
decomposition for the L\'{e}vy measures of the Hermitian matrices of the
random matrix models (\ref{cumulantBG}). In Section $3$ we construct random
matrix models for free infinitely divisible distributions as realizations of
classical matrix stochastic integrals with respect to matrix-valued L\'{e}vy
processes. We prove that these matrix stochastic integrals have the Fourier
transform (\ref{cumulantBG}). In Section $4$ we provide examples of random
matrix models of matrix stochastic integrals for several classes of free
infinitely divisible distributions. In particular, the class of free
selfdecomposable distributions corresponding to the image $\Lambda$ of the
class of selfdecomposable distributions in (\ref{selfdecomprepresentation}),
has random matrix models of Ornstein-Uhlenbeck type matrix integrals.

\section{Polar decomposition of L\'{e}vy measures of certain random matrix
models}

Let $\mathbb{M}_{d}=\mathbb{M}_{d\times d}\left(  \mathbb{C}\right)  $ denote
the linear space of $d\times d$ Hermitian matrices with scalar product
$\left \langle A,B\right \rangle =\ $\textrm{tr}$\left(  AB^{\ast}\right)  $ and
norm $\left \Vert A\right \Vert =\left[  \mathrm{tr}\left(  AA^{\ast}\right)
\right]  ^{1/2}$ where \textrm{tr} denotes trace. Let $\mathbb{\tilde{S}%
}_{\mathbb{M}_{d}}=\{V\in \mathbb{M}_{d}:\mathrm{rank}\left(  V\right)
=1,\allowbreak \left \Vert V\right \Vert =1\}$ the set of Hermitian matrices of
$\mathrm{rank}$ $1$ on the unit sphere of $\mathbb{M}_{d}$ and let
$\mathbb{\tilde{S}}_{\mathbb{M}_{d}}^{+}=\left \{  V>0:\text{ }\mathrm{rank}%
\left(  V\right)  =1,\left \Vert V\right \Vert =1\right \}  $ the set of positive
definite matrices of $\mathrm{rank}$ $1$ on the unit sphere of $\mathbb{M}%
_{d}$.

We recall the polar decomposition of L\'{e}vy measures on $\mathbb{R}$, see
\cite{R}\ and \cite{BNMS}. The L\'{e}vy measure $\nu$ of an infinitely
divisible distribution $\mu$ on $\mathbb{R}$ with $0<\nu \left(  \mathbb{R}%
\right)  \leq \infty$, can be expressed as%
\begin{equation}
\nu \left(  B\right)  =\int_{S}\lambda(d\xi)\int_{0}^{\infty}1_{B}(r\xi
)\nu_{\xi}(dr), \label{polardecomp}%
\end{equation}
where $\lambda$ is a measure on the unit sphere $S=\left \{  -1,1\right \}  $ of
$\mathbb{R}$ such that $0<\lambda \left(  S\right)  \leq \infty$ and $\nu_{\xi}$
is a measure on $(0,\infty)$ for each $\xi \in S$ such that $0<\nu_{\xi}\left(
(0,\infty)\right)  \leq \infty$. Here $\lambda$ and $\nu_{\xi}$ are called the
spherical and radial components of $\nu$, respectively.

Let $\widehat{\mu}$ and $\mathcal{C}_{\mu}$ denote the Fourier transform and
the cumulant transform of an infinitely divisible distribution $\mu$ on
$\mathbb{R}$, respectively. That is, $\mathcal{C}_{\mu}$ is the unique
continuous function from $\mathbb{R}$ into $\mathbb{C}$ such that
$\mathcal{C}_{\mu}\left(  0\right)  =0$ and $\widehat{\mu}(z)=\exp \left(
\mathcal{C}_{\mu}\left(  z\right)  \right)  $ for every $z\in \mathbb{R}$.

In \cite[Theorem 6.1]{BG} it is established that for any infinitely divisible
distribution $\mu$ on $\mathbb{R}$ there exists a random matrix model of
Hermitian matrices $(M_{d})_{d\geq1}$ for $\Lambda(\mu)$. Moreover, in
\cite[Theorem 3.1]{BG} the Fourier transform of $M_{d}$, for each $d\geq1$, is
given by%
\begin{equation}
\mathbb{E}[\exp(i\mathrm{tr}(M_{d}A))]=\exp \left[  d\mathbb{E}_{u}%
\mathcal{C}_{\mu}\left(  \left \langle u,Au\right \rangle \right)  \right]
\text{ }A\  \text{Hermitian} \label{FourierT BG}%
\end{equation}
where $\mathcal{C}_{\mu}$ is the cumulant transform of $\mu$, $u$ is a
uniformly distributed column random vector in the unit sphere of
$\mathbb{C}^{d}$ and $\left \langle \cdot,\cdot \right \rangle $ is the usual
Hermitian product of $\mathbb{C}^{d}$. In the sequel we denote by $\omega_{d}$
the probability measure on the set of matrices of \textrm{rank} $1$ induced by
the transformation
\begin{equation}
u\rightarrow V=uu^{\ast}\text{.} \label{inducedprobability}%
\end{equation}

In the following result we find the specific form of the polar decomposition
for the L\'{e}vy measures of the random matrix models considered in \cite{BG}.

\begin{theorem}
\label{polar}Let $\mu \ $be an infinitely divisible distribution on
$\mathbb{R}$ with L\'{e}vy measure $\nu$ and let $(M_{d})_{d\geq1}$ a random
matrix model for $\Lambda \left(  \mu \right)  $ where $M_{d}$ has the Fourier
transform (\ref{FourierT BG}). Then the L\'{e}vy measure $\nu_{M_{d}}$ of
$M_{d}$ is expressed as%
\[
\nu_{M_{d}}\left(  B\right)  =d\int_{\mathbb{\tilde{S}}_{\mathbb{M}_{d}}}%
\int_{0}^{\infty}1_{B}\left(  rV\right)  \nu_{V}\left(  dr\right)  \Pi \left(
dV\right)  \text{\quad}B\in \mathcal{B}\left(  \mathbb{M}_{d}\text{%
$\backslash$%
}\left \{  0\right \}  \right)  \text{,}%
\]
where $\nu_{V}=\nu^{+}$ or $\nu^{-}$ according to $V>0$ or\ $V<0$ and
$\Pi \left(  dV\right)  $ is a measure on $\mathbb{\tilde{S}}_{\mathbb{M}_{d}}$
such that
\[
\int \limits_{\mathbb{\tilde{S}}_{\mathbb{M}_{d}}}1_{D}\left(  V\right)
\Pi \left(  dV\right)  =\int \limits_{\mathbb{\tilde{S}}_{\mathbb{M}_{d}}^{+}%
}\int \limits_{\left \{  -1,1\right \}  }1_{D}\left(  \xi V\right)
\lambda \left(  d\xi \right)  \omega_{d}\left(  dV\right)  \text{\quad}%
D\in \mathcal{B}\left(  \mathbb{\tilde{S}}_{\mathbb{M}_{d}}\right)  \text{,}%
\]
where $\lambda$ is the spherical measure of $\nu$ and $\omega_{d}$ is the
probability measure on $\mathbb{\tilde{S}}_{\mathbb{M}_{d}}^{+}$ given by
(\ref{inducedprobability}).
\end{theorem}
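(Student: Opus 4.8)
The plan is to compute the Lévy measure of $M_d$ directly from its Fourier transform (\ref{FourierT BG}) and then massage the resulting integral into polar form. Starting from the Lévy--Khintchine representation of $\mu$ on $\mathbb{R}$, its cumulant transform $\mathcal{C}_\mu$ carries the term $\int_{\mathbb{R}}\bigl(e^{izx}-1-izx1_{|x|\le1}\bigr)\nu(dx)$. Substituting $z=\langle u,Au\rangle$ into this expression and taking the expectation $\mathbb{E}_u$ over the uniform $u$ on the sphere of $\mathbb{C}^d$, I would read off from (\ref{FourierT BG}) that the jump (Lévy) part of $\operatorname{tr}(M_d A)$, equivalently of $M_d$ as an $\mathbb{M}_d$-valued infinitely divisible element, is governed by the measure obtained by pushing $\nu$ forward through the map $x\mapsto xV$ where $V=uu^\ast$, weighted by $d\,\omega_d(dV)$. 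The key identity to exploit is $\langle u,Au\rangle = \operatorname{tr}(A\,uu^\ast)=\langle A,V\rangle$, so that a scalar jump of size $x$ in the direction $\xi\in\{-1,1\}$ becomes a matrix jump $x\,\xi\,V$ in the Hermitian space, with $V$ a rank-one positive matrix of unit norm.

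The main work is the change of variables realizing the polar decomposition. I would begin with the scalar polar decomposition (\ref{polardecomp}) of $\nu$, writing $\nu(dx)$ in terms of the spherical measure $\lambda$ on $\{-1,1\}$ and the radial measures $\nu^{+},\nu^{-}$ on $(0,\infty)$. For a Borel test set $B\subset\mathbb{M}_d\setminus\{0\}$, the Lévy measure $\nu_{M_d}$ evaluated on $B$ should be
\[
\nu_{M_d}(B)=d\int_{\mathbb{\tilde{S}}_{\mathbb{M}_d}^{+}}\int_{\{-1,1\}}\int_0^\infty 1_B\bigl(r\,\xi V\bigr)\,\nu_\xi(dr)\,\lambda(d\xi)\,\omega_d(dV),
\]
where $V=uu^\ast$ ranges over $\mathbb{\tilde{S}}_{\mathbb{M}_d}^{+}$ under $\omega_d$. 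The decomposition I want to prove then follows by absorbing the $\xi$-integration and the sign of $V$ into the definition of $\Pi(dV)$ on the full sphere $\mathbb{\tilde{S}}_{\mathbb{M}_d}$: for $\xi=+1$ the direction matrix is $V>0$ and the radial measure is $\nu^{+}$, while for $\xi=-1$ it is $\xi V<0$ and the radial measure is $\nu^{-}$. This is exactly the assignment $\nu_V=\nu^{+}$ or $\nu^{-}$ according as $V>0$ or $V<0$, and the formula for $\Pi$ in terms of $\lambda$ and $\omega_d$ records precisely this push-forward of $(\xi,V)\mapsto \xi V$.

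The steps in order are: (i) extract the integral form of $\mathbb{E}_u\mathcal{C}_\mu(\langle u,Au\rangle)$ by inserting the Lévy--Khintchine integrand and using Fubini to exchange $\mathbb{E}_u$ with the $\nu$-integral; (ii) identify the resulting measure on $\mathbb{M}_d$ as the image of $d\,(\nu\otimes\omega_d)$ under $(x,V)\mapsto xV$, which is the Lévy measure $\nu_{M_d}$ since the Gaussian and drift parts do not contribute jumps; (iii) insert the scalar polar decomposition (\ref{polardecomp}) of $\nu$ and split into the directions $\xi=\pm1$; and (iv) repackage the $\xi$- and $V$-integrations as a single integral over $\mathbb{\tilde{S}}_{\mathbb{M}_d}$ against $\Pi$, verifying that the stated formula for $\Pi$ is the correct image measure. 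The main obstacle I anticipate is step (ii): justifying rigorously that the $\mathbb{M}_d$-valued object $M_d$ has a genuine Lévy--Khintchine representation whose Lévy measure is the claimed image measure, rather than merely matching one-dimensional projections $\operatorname{tr}(M_d A)$. Care is needed because (\ref{FourierT BG}) is stated only through the scalar exponents $\operatorname{tr}(M_d A)$; I would use that these exponents determine the full $\mathbb{M}_d$-valued characteristic functional (since $A$ ranges over all Hermitian matrices and $\langle A,V\rangle$ separates points), and that the map $x\mapsto xV$ sends the truncation region $|x|\le1$ to $\|xV\|=|x|\le1$ because $\|V\|=1$, so the compensator transforms consistently and no spurious drift is introduced.
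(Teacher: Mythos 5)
Your proposal is correct and follows essentially the same route as the paper's proof: insert the L\'{e}vy--Khintchine integrand into (\ref{FourierT BG}), use $\left\langle u,Au\right\rangle =\mathrm{tr}\left(  Auu^{\ast}\right)$ together with Fubini to realize the jump part as the image of $d\,(\nu\otimes\omega_{d})$ under $(x,V)\mapsto xV$, then insert the scalar polar decomposition (\ref{polardecomp}) and push forward $(\xi,V)\mapsto\xi V$ to define $\Pi$, with the sign of $\xi$ deciding whether the radial measure is $\nu^{+}$ or $\nu^{-}$. Your worry in step (ii) resolves exactly as you indicate: since $\mathrm{tr}(M_{d}A)$ is the trace inner product of $M_{d}$ with $A$, equation (\ref{FourierT BG}) is the full characteristic function of $M_{d}$ as an $\mathbb{M}_{d}$-valued random element, and uniqueness of the matrix L\'{e}vy--Khintchine representation (with the truncation region preserved because $\left\Vert xV\right\Vert =\left\vert x\right\vert$ when $\left\Vert V\right\Vert =1$) identifies the L\'{e}vy measure, which is the same implicit identification the paper makes.
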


\begin{proof}
Let $\lambda(d\xi)$ and $\nu_{\xi}$ be the spherical and radial components of
$\nu$ given by (\ref{polardecomp}), respectively. For every $z\in \mathbb{R}$,
$\mathcal{C}_{\mu}\left(  z\right)  =i\gamma z+\int_{\mathbb{R}}\left[
e^{izx}-1-ixz1_{\left \vert x\right \vert \leq1}\left(  x\right)  \right]
\nu \left(  dx\right)  $ where $\gamma \in \mathbb{R}$ and where we have omitted
the Gaussian term without loss of generality. From (\ref{FourierT BG}) we have
for every Hermitian matrix $A$,%
\begin{align*}
&  \log \mathbb{E}[\exp(i\mathrm{tr}(M_{d}A))]%
\genfrac{}{}{0pt}{}{\text{ }}{\text{ }}%
\\
&  =d\mathbb{E}_{u}\left \{  i\gamma \left \langle u,Au\right \rangle
+\int_{\mathbb{R}}\left[  e^{i\left \langle u,Au\right \rangle x}%
-1-ix\left \langle u,Au\right \rangle 1_{\left \vert x\right \vert \leq1}\left(
x\right)  \right]  \nu \left(  dx\right)  \right \}
\end{align*}%
\[
=i\gamma r_{A}d+d\int_{\mathbb{\tilde{S}}_{\mathbb{M}_{d}}^{+}}\int
_{\mathbb{R}}\left[  e^{i\mathrm{tr}\left(  Auu^{\ast}\right)  x}%
-1-ix\mathrm{tr}\left(  Auu^{\ast}\right)  1_{\left \vert x\right \vert \leq
1}\left(  x\right)  \right]  \nu \left(  dx\right)  \omega_{d}\left(
dV\right)
\]%
\[
=i\gamma r_{A}d+d\int \nolimits_{\mathbb{\tilde{S}}_{\mathbb{M}_{d}}^{+}}%
\int_{\left \{  -1,1\right \}  }\int_{0}^{\infty}\left[  e^{i\mathrm{tr}\left(
AV\right)  r\xi}-1-ir\xi \mathrm{tr}\left(  AV\right)  1_{\left \vert
r\xi \right \vert \leq1}\left(  r\xi \right)  \right]  \nu_{\xi}\left(
dr\right)  \lambda \left(  d\xi \right)  \omega_{d}\left(  dV\right)
\]%
\[
=i\gamma r_{A}d+d\int_{\mathbb{\tilde{S}}_{\mathbb{M}_{d}}}\int_{0}^{\infty
}e^{i\mathrm{tr}\left(  A\tilde{V}\right)  r}-1-ir\mathrm{tr}\left(
A\tilde{V}\right)  1_{r\leq1}\left(  r\right)  \nu_{\xi}\left(  dr\right)
\Pi \left(  d\tilde{V}\right)  ,
\]
where $r_{A}=\mathbb{E}_{u}\left \langle u,Au\right \rangle $ and $\Pi \left(
d\tilde{V}\right)  $ is a measure on $\mathbb{\tilde{S}}_{\mathbb{M}_{d}}$
such that for any Borel set $D$ of $\mathbb{\tilde{S}}_{\mathbb{M}_{d}}$%
\[
\int \limits_{\left \{  \tilde{V}:\text{ }\mathrm{rank}\left(  \tilde{V}\right)
=1,\left \Vert \tilde{V}\right \Vert =1\right \}  }1_{D}\left(  \tilde{V}\right)
\Pi \left(  d\tilde{V}\right)  =\int \limits_{\left \{  V>0:\text{ }%
\mathrm{rank}\left(  V\right)  =1,\left \Vert V\right \Vert =1\right \}  }%
\int \limits_{\left \{  -1,1\right \}  }1_{D}\left(  \xi V\right)  \lambda \left(
d\xi \right)  \omega_{d}\left(  dV\right)  \text{,}%
\]
here the equivalence of the regions of integration follows from the spectral
representation theorem, since any $V$ with $\mathrm{rank}\left(  V\right)  =1$
and $\left \Vert V\right \Vert =1$ can be written as $V=\lambda vv^{\ast}$ where
$\lambda$ is eigenvalue of $V$ with corresponding eigenvector $v$, hence
$x^{\ast}Vx=\lambda \left \vert x^{\ast}v\right \vert ^{2}$ and therefore $V>0$
or $V<0$ according to the sign of $\lambda$.
\end{proof}

\section{Random matrix models of stochastic integral type}

Any real-valued L\'{e}vy process $\left \{  X_{t}^{(\mu)}:t\geq0\right \}  $
with law $\mu$ at time $t=1$, uniquely induces a real-valued independently
scattered random measure $\left \{  M^{(\mu)}\left(  B\right)  :B\in
\mathcal{B}_{0}\left(  \left[  0,\infty \right)  \right)  \right \}  $ such that
$M^{(\mu)}\left(  \left[  0,t\right]  \right)  =X_{t}^{(\mu)}$ almost surely,
where $\mathcal{B}_{0}\left(  \left[  0,\infty \right)  \right)  $ is the
family of bounded Borel sets in $\left[  0,\infty \right)  $. We will consider
$M^{(\mu)}$-integrable $\left(  \text{or }X_{t}^{(\mu)}\text{-integrable}%
\right)  $ real-valued functions $h$ on $\left[  0,\infty \right)  $ in the
sense of \cite{RR} and \cite{Sato2}. Then $\int_{B}h(t)M^{(\mu)}(dt)$ $\left(
\text{or }\int_{B}h(t)dX_{t}^{(\mu)})\right)  $ is defined almost surely for
every $B\in \mathcal{B}_{0}\left(  \left[  0,\infty \right)  \right)  $. The
stochastic integral of a real-valued $h$ on $\left[  0,\infty \right)  $ with
respect to $X_{t}^{(\mu)}$ is a real-valued infinitely divisible random
variable written as%
\begin{equation}
\eta=\int_{0}^{\infty}h(t)dX_{t}^{(\mu)}\text{,} \label{stointeg}%
\end{equation}
which is defined as the limit in probability of $\int_{\left[  0,s\right]
}h(t)dX_{t}^{(\mu)}$ as $s\rightarrow \infty$ whenever the limit exists.
Furthermore, its cumulant transform is given by%
\begin{equation}
\mathcal{C}_{\eta}\left(  z\right)  =\int_{0}^{\infty}\mathcal{C}_{\mu}\left(
h(t)z\right)  dt\quad z\in \mathbb{R}\text{.} \label{ctr}%
\end{equation}
\bigskip

For the complex matrix case we have a similar result; see \cite{BNS} for the
case of real matrices. For any infinitely divisible matrix $\Psi$ in
$\mathbb{M}_{d}$ with associated matrix L\'{e}vy process $\left \{  \Psi
_{t}^{d}:t\geq0\right \}  $, the infinitely divisible $d\times d$ matrix valued
stochastic integral
\begin{equation}
M=\int_{0}^{\infty}h(t)d\Psi_{t}^{d}\text{,} \label{sim}%
\end{equation}
whenever exists, has cumulant transform%
\begin{equation}
\mathcal{C}_{M}\left(  A\right)  =\int_{0}^{\infty}\mathcal{C}_{\Psi}\left(
h(t)A\right)  dt\quad A\in \mathbb{M}_{d}\text{.} \label{ctm}%
\end{equation}
In this work we consider matrix L\'{e}vy processes $\left \{  \Psi_{t}%
^{d}\right \}  $ corresponding to L\'{e}vy measures of the form
\begin{equation}
\nu_{\Psi}^{d}(B)=d\int_{\mathbb{\tilde{S}}_{\mathbb{M}_{d}}^{+}}\omega
_{d}\left(  dV\right)  \int_{\mathbb{R}}1_{B}\left(  xV\right)  \nu_{\mu
}\left(  dx\right)  \text{,} \label{levymeasure}%
\end{equation}
where $\omega_{d}$ is the probability measure induced by the transformation
$u\rightarrow V=uu^{\ast}$ in (\ref{inducedprobability}) and $\nu_{\mu}$ is a
L\'{e}vy measure of an infinitely divisible distribution $\mu$ on $\mathbb{R}%
$. Observe that $\nu_{\Psi}^{d}$ is L\'{e}vy measure supported in the subset
of $\mathrm{rank}$ one matrices in $\mathbb{M}_{d}$.

The following is the main result of this work, it provides random matrix
models for free infinitely divisible distributions on $\mathbb{R}$ given by
matrix stochastic integrals of the form (\ref{sim}) when the corresponding
classical infinitely divisible distributions under $\Lambda$ are representable
as the random integrals of the form (\ref{stointeg}).

\begin{theorem}
\label{matrixintegral}Let $\mu_{h}$ be an infinitely divisible distribution on
$\mathbb{R}$ given by the stochastic integral representation%
\begin{equation}
\mu_{h}=\mathcal{L}\left(  \int_{0}^{\infty}h(t)dX_{t}^{\left(  \mu \right)
}\right)  \text{,} \label{ir}%
\end{equation}
where $X_{t}^{\left(  \mu \right)  }$ is a L\'{e}vy process on $\mathbb{R}$
with law $\mu$ at time $t=1$ and L\'{e}vy measure $\nu_{\mu}$. The free
infinitely divisible distribution $\Lambda \left(  \mu_{h}\right)  $ has a
random matrix model given by the ensemble of infinitely divisible matrix
stochastic integrals%
\begin{equation}
\left(  M_{h}^{d}=\int_{0}^{\infty}h(t)d\Psi_{t}^{d}\right)  _{d\geq1}\text{,}
\label{rmm}%
\end{equation}
where $\Psi_{t}^{d}$ is the $\mathbb{M}_{d}$-valued L\'{e}vy process with
L\'{e}vy measure $\nu_{\Psi}^{d}$ given by (\ref{levymeasure}) in terms of
$\omega_{d}$ and $\nu_{\mu}$.
\end{theorem}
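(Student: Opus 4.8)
The plan is to reduce the statement to the Fourier-transform criterion of Benaych-Georges. Since \cite[Theorem 6.1]{BG} guarantees that \emph{any} ensemble of Hermitian matrices whose Fourier transform has the form (\ref{FourierT BG}) relative to some classical infinitely divisible law is a random matrix model for its image under $\Lambda$, it suffices to verify that the ensemble (\ref{rmm}) has exactly this Fourier transform with $\mu$ replaced by $\mu_{h}$, i.e.
\[
\mathbb{E}\left[  \exp \left(  i\,\mathrm{tr}\left(  M_{h}^{d}A\right)  \right)
\right]  =\exp \left[  d\,\mathbb{E}_{u}\mathcal{C}_{\mu_{h}}\left(  \left \langle
u,Au\right \rangle \right)  \right]  ,\qquad A\text{ Hermitian.}
\]
Everything then becomes a cumulant-transform computation, and no direct analysis of the empirical spectral distribution is needed.

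First I would compute the cumulant transform $\mathcal{C}_{\Psi}$ of the matrix infinitely divisible distribution at time $t=1$ from its L\'{e}vy measure (\ref{levymeasure}). Inserting $\nu_{\Psi}^{d}$ into the L\'{e}vy--Khintchine representation and using the three identities that already drive the Section~2 computation --- namely $\mathrm{tr}(Auu^{\ast})=\left \langle u,Au\right \rangle $, the fact that $\left \Vert xV\right \Vert =\left \vert x\right \vert $ for $V=uu^{\ast}\in \mathbb{\tilde{S}}_{\mathbb{M}_{d}}^{+}$ (so the truncation $1_{\left \Vert xV\right \Vert \leq1}$ collapses to $1_{\left \vert x\right \vert \leq1}$), and that $\omega_{d}$ is the law of $uu^{\ast}$ under the uniform $u$ --- I expect to obtain, after choosing the drift so that $\mathrm{tr}(\gamma_{\Psi}A)=d\gamma \,\mathbb{E}_{u}\left \langle u,Au\right \rangle $, the identity $\mathcal{C}_{\Psi}(A)=d\,\mathbb{E}_{u}\mathcal{C}_{\mu}\left(  \left \langle u,Au\right \rangle \right)  $. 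This is precisely (\ref{FourierT BG}) for $\mu$ itself. I would then feed this into the cumulant formula (\ref{ctm}), giving
\[
\mathcal{C}_{M_{h}^{d}}(A)=\int_{0}^{\infty}\mathcal{C}_{\Psi}(h(t)A)\,dt
=\int_{0}^{\infty}d\,\mathbb{E}_{u}\mathcal{C}_{\mu}\left(  h(t)\left \langle
u,Au\right \rangle \right)  \,dt,
\]
using $\left \langle u,h(t)Au\right \rangle =h(t)\left \langle u,Au\right \rangle $. Interchanging the $t$-integral with the expectation over the (compact) unit sphere by Fubini, the inner integral becomes $\int_{0}^{\infty}\mathcal{C}_{\mu}(h(t)z)\,dt$ with $z=\left \langle u,Au\right \rangle $, which by the scalar representation (\ref{ctr}) equals $\mathcal{C}_{\mu_{h}}(z)$. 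Hence $\mathcal{C}_{M_{h}^{d}}(A)=d\,\mathbb{E}_{u}\mathcal{C}_{\mu_{h}}\left(  \left \langle u,Au\right \rangle \right)  $, which is the desired Fourier transform, and the conclusion follows from \cite[Theorem 6.1]{BG}.

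The main obstacle I anticipate is not the algebra but the analytic bookkeeping around existence and interchange of limits. One must first justify that $h$ is $\Psi_{t}^{d}$-integrable so that $M_{h}^{d}$ in (\ref{rmm}) is well defined in the sense of \cite{RR} and \cite{Sato2}; this should follow from the hypothesis that the scalar integral (\ref{ir}) exists, because $\nu_{\Psi}^{d}$ is built directly from $\nu_{\mu}$ along rank-one directions and the Rajput--Rosi\'{n}ski integrability conditions are expressed through the radial behaviour of the L\'{e}vy measure, which is inherited from $\nu_{\mu}$ via (\ref{levymeasure}). Secondly, the passage $\int_{0}^{\infty}\mathcal{C}_{\Psi}(h(t)A)\,dt=d\,\mathbb{E}_{u}\int_{0}^{\infty}\mathcal{C}_{\mu}(h(t)\left \langle u,Au\right \rangle )\,dt$ requires a dominated-convergence argument for the Fubini interchange, although this is mild since $\mathbb{E}_{u}$ is integration against a probability measure on a compact set and the integrand is continuous in $u$. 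Finally, some care is needed to confirm that the drift and the suppressed Gaussian parts match on both sides under the common truncation $1_{\left \vert x\right \vert \leq1}$.
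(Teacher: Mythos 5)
Your proposal is correct and follows essentially the same route as the paper's own proof: computing $\mathcal{C}_{\Psi_{1}^{d}}(A)=d\,\mathbb{E}_{u}\mathcal{C}_{\mu}\left(\left\langle u,Au\right\rangle\right)$ from (\ref{levymeasure}), applying (\ref{ctm}), interchanging $\int_{0}^{\infty}dt$ with $\mathbb{E}_{u}$, and invoking (\ref{ctr}) to identify the Fourier transform of $M_{h}^{d}$ with (\ref{FourierT BG}) for $\mu_{h}$, so that \cite[Theorem 6.1]{BG} applies. Your added remarks on integrability of $h$ with respect to $\Psi_{t}^{d}$ and the Fubini justification are sensible bookkeeping that the paper leaves implicit, but they do not change the argument.
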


\begin{proof}
We will proof that the random matrices of the ensemble $\left(  M_{h}%
^{d}\right)  _{d\geq1}$ in (\ref{rmm}) and the random matrices of the random
matrix model $(M_{d})_{d\geq1}$ for $\Lambda \left(  \mu_{h}\right)  $ given in
\cite[Theorem 6.1]{BG} have the same laws. Recall from (\ref{FourierT BG})
that the Fourier transform of $M_{d}$ is given by
\[
\mathbb{E}[\exp(i\mathrm{tr}(M_{d}A))]=\exp \left[  d\mathbb{E}_{u}%
\mathcal{C}_{\mu_{h}}\left(  \left \langle u,Au\right \rangle \right)  \right]
\ A\text{ Hermitian.}%
\]
We will prove that this Fourier transform of $M_{d}$ coincides with the
Fourier transform of $M_{h}^{d}$. For that, we first calculate the cumulant
transform of $\Psi_{t}^{d}$ at time $1$ using (\ref{levymeasure}),%
\begin{align}
\mathcal{C}_{\Psi_{1}^{d}}\left(  A\right)   &  =\int_{\mathbb{M}_{d}}\left[
e^{i\mathrm{tr}(AX)}-1-i\mathrm{tr}(AX)1_{\left \Vert X\right \Vert \leq
1}\left(  X\right)  \right]  \nu_{\Psi_{1}}^{d}\left(  dX\right) \nonumber \\
&  =d\int_{\mathbb{\tilde{S}}_{\mathbb{M}_{d}}^{+}}\int_{\mathbb{R}}\left[
e^{i\mathrm{tr}(AV)x}-1-ix\mathrm{tr}(AV)1_{\left \vert x\right \vert \leq
1}\left(  x\right)  \right]  \omega_{d}\left(  dV\right)  \nu_{\mu}\left(
dx\right) \nonumber \\
&  =d\int_{\mathbb{R}}\mathbb{E}_{V}\left[  e^{i\mathrm{tr}(AV)x}%
-1-ix\mathrm{tr}(AV)1_{\left \vert x\right \vert \leq1}\left(  x\right)
\right]  \nu_{\mu}\left(  dx\right) \nonumber \\
&  =d\mathbb{E}_{u}\int_{\mathbb{R}}\left[  e^{i\mathrm{tr}(Auu^{\ast}%
)x}-1-ix\mathrm{tr}(Auu^{\ast})1_{\left \vert x\right \vert \leq1}\left(
x\right)  \right]  \nu_{\mu}\left(  dx\right) \nonumber \\
&  =d\mathbb{E}_{u}\mathcal{C}_{\mu}\left(  \left \langle u,Au\right \rangle
\right)  \text{.} \label{cumulant}%
\end{align}
Now we calculate the cumulant transform of $M_{h}^{d}$ from (\ref{ctm}) and
(\ref{cumulant}),%
\begin{align*}
\mathcal{C}_{M_{h}^{d}}\left(  A\right)   &  =\int_{0}^{\infty}\mathcal{C}%
_{\Psi_{1}^{d}}\left(  h\left(  t\right)  A\right)  dt=\int_{0}^{\infty
}d\mathbb{E}_{u}\mathcal{C}_{\mu}\left(  \left \langle u,h\left(  t\right)
Au\right \rangle \right)  dt\\
&  =d\mathbb{E}_{u}\int_{0}^{\infty}\mathcal{C}_{\mu}\left(  h\left(
t\right)  \left \langle u,Au\right \rangle \right)  dt=d\mathbb{E}%
_{u}\mathcal{C}_{\mu_{h}}\left(  \left \langle u,Au\right \rangle \right)
\text{,}%
\end{align*}
where in the last equality we have used the relation (\ref{ctr}) between the
cumulant transforms of $\mu_{h}$ and $\mu$ corresponding to the stochastic
integral representation (\ref{ir}).
\end{proof}

\begin{remark}
Let $\nu_{\mu},\nu_{\mu_{h}},\nu_{M_{h}^{d}}$ and $\nu_{\Psi^{d}}$ denote the
L\'{e}vy measures of $\mu,\mu_{h},M_{h}^{d}\ $and $\Psi_{t}^{d} $ at time $1$
in Theorem \ref{matrixintegral}, respectively. From \cite{Sato2} the L\'{e}vy
measures of $\mu$ and $\mu_{h}$ are related as follows%
\[
\nu_{\mu_{h}}\left(  B\right)  =\int_{0}^{\infty}dt\int_{\mathbb{R}}%
1_{B}(h(t)x)\nu_{\mu}\left(  dx\right)  \text{\quad}B\in \mathcal{B}\left(
\mathbb{R}\text{%
$\backslash$%
}\left \{  0\right \}  \right)
\]
and from \cite{BNS} it is obtained a similar relation between the L\'{e}vy
measures of $M_{h}^{d}\ $and $\Psi^{d}$,%
\[
\nu_{M_{h}^{d}}\left(  B\right)  =\int_{0}^{\infty}dt\int_{\mathbb{M}_{d}%
}1_{B}(h(t)X)\nu_{\Psi^{d}}\left(  dX\right)  \text{\quad}B\in \mathcal{B}%
\left(  \mathbb{M}_{d}\text{%
$\backslash$%
}\left \{  0\right \}  \right)  \text{.}%
\]
These two relations combined with (\ref{levymeasure}) yield the L\'{e}vy
measure of $M_{h}^{d}$ in terms of the L\'{e}vy measure of $\mu_{h}$%
\begin{align}
\nu_{M_{h}^{d}}\left(  B\right)   &  =d\int_{\mathbb{\tilde{S}}_{\mathbb{M}%
_{d}}^{+}}\omega_{d}\left(  dV\right)  \int_{\mathbb{R}}\nu_{\mu}\left(
dx\right)  \int_{0}^{\infty}1_{B}(h(t)xV)dt\nonumber \\
&  =d\int_{\mathbb{\tilde{S}}_{\mathbb{M}_{d}}^{+}}\omega_{d}\left(
dV\right)  \int_{\mathbb{R}}\nu_{\mu_{h}}\left(  dx\right)  1_{B}%
(xV)\text{,\quad}B\in \mathcal{B}\left(  \mathbb{M}_{d}\text{%
$\backslash$%
}\left \{  0\right \}  \right)  \text{.} \label{Levymasurermm}%
\end{align}

\end{remark}

\section{Examples}

In the following $I(\mathbb{R})$ denotes the class of infinitely divisible
distributions on $\mathbb{R}$ and $L(\mathbb{R})$ denotes the class of
selfdecomposable distributions on $\mathbb{R}$.

Let $\left(  \lambda,\nu_{\xi}\right)  $ denote the polar decomposition of the
L\'{e}vy measure $\nu$ of any $\mu \in I(\mathbb{R})$ given by
(\ref{polardecomp}), that is
\[
\nu \left(  B\right)  =\int_{S}\lambda(d\xi)\int_{0}^{\infty}1_{B}(r\xi
)\nu_{\xi}(dr)\text{,}%
\]
where $\lambda$ and $\nu_{\xi}$ are the spherical and radial components of
$\nu$, respectively and $S=\left \{  -1,1\right \}  $. Moreover, let $\left(
\lambda,k_{\xi}\right)  $ denote the following description of the above
descomposition for selfdecomposable distributions, see \cite{Sato1}. If $\mu$
$\in L(\mathbb{R})\,$the radial component of its L\'{e}vy measure $\nu$ is
expressed as
\[
\nu_{\xi}(dr)=1_{(0,\infty)}(r)\frac{k_{\xi}(r)}{r}dr\text{,}%
\]
where $k_{\xi}(r)$ is a nonnegative measurable function in $\xi \in S$ and
decreasing, right continuous in $r\in(0,\infty)$. Here $k_{\xi}$ is called the
$k$-function of $\nu$.

Throughout this section we provide examples of random matrix models for free
infinitely divisible distributions $\Lambda \left(  \mu_{h}\right)  $ given by
the ensembles of matrix stochastic integrals (\ref{rmm}), for several classes
infinitely divisible distributions $\mu_{h}$ with stochastic integral
representation (\ref{ir}). In most of examples we express the L\'{e}vy measure
$\nu_{M_{h}^{d}}$ in terms of $\nu_{\mu_{h}}$. We emphasize, according to
(\ref{Levymasurermm}), that these L\'{e}vy measures $\nu_{M_{h}^{d}}$ are
supported in the subset of $\mathrm{rank}$ one matrices in $\mathbb{M}_{d}$.

In all examples below $\nu_{\mu},\nu_{\mu_{h}},\nu_{M_{h}^{d}}$ and $\nu
_{\Psi^{d}}$ denote the L\'{e}vy measures of $\mu,\mu_{h},M_{h}^{d}\ $and
$\Psi_{t}^{d}$ at time $1$ in Theorem \ref{matrixintegral}, respectively.

The following two examples recover the random matrix models of Theorems $4.1$
and $4.3$ in \cite{PAS} and find their corresponding L\'{e}vy measures.

\textbf{Example 1}. The class of Generalized Gamma Convolutions $T(\mathbb{R}%
_{+})$ is the smallest class of infinitely divisible distributions on
$\mathbb{R}_{+}$ that contains all Gamma distributions and that is closed
under convolution and weak convergence.

In \cite{JRY}, any distribution in $T(\mathbb{R}_{+})$ has stochastic integral
representation (\ref{ir}) where $\mu$ is a Gamma distribution and $h(t)$ is a
Borel measurable function $h:\mathbb{R}_{+}\rightarrow \mathbb{R}_{+}$ such
that $\int_{0}^{\infty}\log \left(  1+h(t)\right)  dt<\infty$. Such a
representation is called the Wiener-Gamma integral representation.

Let $\mu_{h}\in T(\mathbb{R}_{+})$ has the Wiener-Gamma integral
representation. By $(8)$ in \cite{PAS} the L\'{e}vy measure of $\mu_{h}$ is
expressed as%
\[
\nu_{\mu_{h}}\left(  dx\right)  =1_{\left(  0,\infty \right)  }(x)\int
_{0}^{\infty}\frac{e^{-x/h\left(  t\right)  }}{x}dtdx\text{.}%
\]
The corresponding random matrix models (\ref{rmm}) for the free Generalized
Gamma Convolutions $\Lambda(\mu_{h})$ consist of matrix stochastic integrals
of Wiener-Gamma type%
\[
\left(  M_{h}^{d}=\int_{0}^{\infty}h(t)d\Psi_{t}^{d}\right)  _{d\geq1}%
\]
with L\'{e}vy measures%
\[
\nu_{M_{h}^{d}}(B)=d\int_{\mathbb{\tilde{S}}_{\mathbb{M}_{d}}^{+}}\omega
_{d}\left(  dV\right)  \int_{0}^{\infty}dx1_{B}(xV)\int_{0}^{\infty}%
\frac{e^{-x/h\left(  t\right)  }}{x}dt.
\]

\textbf{Example 2}. The class of Thorin distributions $T(\mathbb{R})$ is the
smallest class of infinitely divisible distributions on $\mathbb{R}$ which
contains all distributions in $T(\mathbb{R}_{+})$ and is closed under
convolution, weak convergence and reflection.

It is shown in \cite{BNMS} that this class is characterized by the stochastic
integral representation (\ref{ir}) where $\mu \in ID_{\log}(\mathbb{R})$ and
$h(t)$ is the inverse function of the incomplete Gamma function $g(t)=\int
_{t}^{\infty}e^{-s}s^{-1}ds$.

Let $\mu_{h}\in T(\mathbb{R})$ with such a representation. The random matrix
models (\ref{rmm}) for the corresponding free Thorin distributions
$\Lambda(\mu_{h})$ are given by%
\[
\left(  M_{h}^{d}=\int_{0}^{\infty}g^{\ast}(t)d\Psi_{t}^{d}\right)  _{d\geq
1}\text{\quad}g^{\ast}\text{ inverse function of }g\text{.}%
\]

Let $(\lambda,\nu_{\xi})$ and $(\lambda_{h},\nu_{h_{\xi}})$ be the polar
decompositions of the L\'{e}vy measures of $\mu$ and $\mu_{h}$, respectively.
It is proved in \cite{AM} that the corresponding $k$-function of $\mu_{h}$ is
$k_{h_{\xi}}(r)=\int_{0}^{\infty}e^{-r/s}\nu_{\xi}(ds)$. Therefore the
L\'{e}vy measures of these matrix stochastic integrals are given by%
\[
\nu_{M_{h}^{d}}(B)=d\int_{\mathbb{\tilde{S}}_{\mathbb{M}_{d}}^{+}}\omega
_{d}\left(  dV\right)  \int_{S}\lambda_{h}(d\xi)\int_{0}^{\infty}\frac{dr}%
{r}\left(  \int_{0}^{\infty}e^{-r/s}\nu_{\xi}(ds)\right)  1_{B}(r\xi V).
\]

\textbf{Example 3.} The class of Bondesson distributions $B(\mathbb{R})$ is
the smallest class of infinitely divisible distributions on $\mathbb{R}$ that
contains all mixtures of exponential distributions and that is closed under
convolution, weak convergence and reflection.

This class is characterized by the stochastic integral representation
(\ref{ir}) where $\mu \in I(\mathbb{R})$ and $h(t)=1_{\left(  0,1\right)
}(t)\log \left(  1/t\right)  $, see \cite{BNMS}. If $\mu_{h}\in B(\mathbb{R})$
has such a representation then by $(2.17)$ in \cite{BNMS}%
\[
\nu_{\mu_{h}}\left(  dx\right)  =\int_{0}^{\infty}e^{-s}\nu_{\mu}\left(
s^{-1}dx\right)  ds\text{.}%
\]
Therefore the random matrix models (\ref{rmm}) for the free Bondesson
distributions $\Lambda(\mu_{h})$ are given by%
\[
\left(  M_{h}^{d}=\int_{0}^{1}\log \left(  1/t\right)  d\Psi_{t}^{d}\right)
_{d\geq1}%
\]
with L\'{e}vy measures%
\[
\nu_{M_{h}^{d}}(B)=d\int_{\mathbb{\tilde{S}}_{\mathbb{M}_{d}}^{+}}\omega
_{d}\left(  dV\right)  \int_{-\infty}^{\infty}\int_{0}^{\infty}e^{-s}\nu_{\mu
}\left(  s^{-1}dx\right)  ds1_{B}(xV)\text{.}%
\]

\textbf{Example 4}. The class of Thorin distributions $T(\mathbb{R})$ is also
characterized (see Example $2$) by the stochastic integral representation
(\ref{ir}) where $\mu \in L\left(  \mathbb{R}\right)  $ and $h(t)=1_{\left(
0,1\right)  }(t)\log \left(  1/t\right)  $, see \cite{BNMS}.\newline If
$\mu_{h}\in T(\mathbb{R})$ has this alternative representation, the
corresponding random matrix models (\ref{rmm}) for the free Thorin
distributions $\Lambda(\mu_{h})$ are%
\[
\left(  M_{h}^{d}=\int_{0}^{1}\log \left(  1/t\right)  d\Psi_{t}^{d}\right)
_{d\geq1}\text{.}%
\]

Let $(\lambda,k_{\xi})$ and $(\lambda_{h},k_{h_{\xi}})$ be the corresponding
polar decompositions of the L\'{e}vy measures of $\mu$ and $\mu_{h}$,
respectively. It is shown in \cite{BNMS} that $\lambda=\lambda_{h}$ and%
\[
k_{h_{\xi}}(r)=\int_{0}^{\infty}k_{\xi}(rs^{-1})e^{-s}ds
\]
and hence the L\'{e}vy measures for these random matrix models are of the form%
\begin{align*}
\nu_{M_{h}^{d}}(B)  &  =d\int_{\mathbb{\tilde{S}}_{\mathbb{M}_{d}}^{+}}%
\omega_{d}\left(  dV\right)  \int_{-\infty}^{\infty}1_{B}(xV)\nu_{\mu_{h}%
}\left(  dx\right) \\
&  =d\int_{\mathbb{\tilde{S}}_{\mathbb{M}_{d}}^{+}}\omega_{d}\left(
dV\right)  \int_{S}\lambda \left(  d\xi \right)  \int_{0}^{\infty}\frac{dr}%
{r}\left(  \int_{0}^{\infty}k_{\xi}(rs^{-1})e^{-s}ds\right)  1_{B}(r\xi V).
\end{align*}

\textbf{Example 5.} The class of selfdecomposable distributions $L\left(
\mathbb{R}\right)  $ is characterized by the stochastic integral
representation (\ref{ir}) where $\mu \in I_{\log}(\mathbb{R})$ and
$h(t)=1_{\left(  0,\infty \right)  }(t)e^{-t}$, see \cite{JV}, \cite{SY} and
\cite{Sato1}.

Let $\mu_{h}\in L\left(  \mathbb{R}\right)  $ with such a Ornstein-Uhlenbeck
type integral representation. The random matrix models (\ref{rmm}) for the
corresponding free selfdecomposable distributions $\Lambda(\mu_{h})$ are given
by%
\[
\left(  M_{h}^{d}=\int_{0}^{\infty}e^{-t}d\Psi_{t}^{d}\right)  _{d\geq1}%
\]
and satisfy an $I_{\log}$-condition, that is%
\[
\int_{\left \Vert X\right \Vert >2}\log \left \Vert X\right \Vert \nu_{\Psi}%
^{d}(dX)<\infty \text{,}%
\]
which follows easily from (\ref{levymeasure}) and the fact that $\mu \in
I_{\log}(\mathbb{R})$. These Ornstein-Uhlenbeck type matrix integrals are
supported on the subset of \textrm{rank} one matrices in $\mathbb{M}_{d}%
$.\newline Let $(\lambda,\nu_{\xi})$ and $(\lambda_{h},k_{h_{\xi}})$ be the
corresponding polar decompositions of the L\'{e}vy measures of $\mu$ and
$\mu_{h}$. It is proved in \cite{AM} that the $k$-function of $\nu_{\mu_{h}}$
is given by%
\[
k_{h_{\xi}}(r)=\nu_{\xi}\left(  \left(  r,\infty \right)  \right)  \text{.}%
\]
Therefore, the L\'{e}vy measures of these random matrix models of
Ornstein-Uhlenbeck type for free selfdecomposable distributions $\Lambda
(\mu_{h})$ are given by%
\begin{align*}
\nu_{M_{h}^{d}}(B)  &  =d\int_{\mathbb{\tilde{S}}_{\mathbb{M}_{d}}^{+}}%
\omega_{d}\left(  dV\right)  \int_{-\infty}^{\infty}1_{B}(xV)\nu_{\mu_{h}%
}\left(  dx\right) \\
&  =d\int_{\mathbb{\tilde{S}}_{\mathbb{M}_{d}}^{+}}\omega_{d}\left(
dV\right)  \int_{S}\lambda_{h}\left(  d\xi \right)  \int_{0}^{\infty}\frac
{dr}{r}\nu_{\xi}\left(  \left(  r,\infty \right)  \right)  1_{B}(r\xi V).
\end{align*}

\textbf{Example 6.} The class$\mathfrak{\ }$of type $G$ distributions
$G_{sym}(\mathbb{R})$ is the class of symmetric distributions on $\mathbb{R}$
which are variance mixtures of the standard Gaussian distribution with
positive infinitely divisible mixing distributions.

In \cite{AM2} is proved that this class is characterized by the stochastic
integral representation (\ref{ir}) where $\mu \in I(\mathbb{R})$ and $h(t)$ is
the inverse function of $f(t)=\int_{t}^{\infty}\varphi(u)du$ where
$\varphi(u)=\frac{1}{\sqrt{2\pi}}e^{-\frac{u^{2}}{2}}$.

Let $\mu_{h}\in G_{sym}(\mathbb{R})$ with such a representation. In this case
the random matrix models (\ref{rmm}) for the corresponding free type $G$
distributions $\Lambda(\mu_{h})$ are of the form%
\[
\left(  M_{h}^{d}=\int_{0}^{\infty}f^{\ast}(t)d\Psi_{t}^{d}\right)  _{d\geq
1}\quad f^{\ast}\text{ inverse function of }f\text{.}%
\]
\newline Let $(\lambda_{h},\nu_{h_{\xi}})$ be the polar decomposition of the
L\'{e}vy measure of the type $G$ distribution $\mu_{h}$. It is known that the
radial component $\nu_{h_{\xi}}$ can be written as
\[
\nu_{h_{\xi}}\left(  dr\right)  =g_{h_{\xi}}(r^{2})dr
\]
where $g_{h_{\xi}}(r)$ is measurable in $\xi \in S$ and completely monotone in
$r\in(0,\infty)$. It is proved in \cite{AM} that
\[
g_{h_{\xi}}(r)=\int_{0}^{\infty}\varphi(r^{1/2}/s)s^{-1}\nu_{\xi}(ds)\text{,}%
\]
where $\nu_{\xi}$ is the radial component of $\nu_{\mu}$. Therefore the
L\'{e}vy measures for these random matrix models for free type $G$
distributions $\Lambda(\mu_{h})$ are given by%
\[
\nu_{M_{h}^{d}}(B)=d\int_{\mathbb{\tilde{S}}_{\mathbb{M}_{d}}^{+}}\omega
_{d}\left(  dV\right)  \int_{S}\lambda_{h}\left(  d\xi \right)  \int
_{0}^{\infty}dr\left(  \int_{0}^{\infty}\varphi(r/s)s^{-1}\nu_{\xi
}(ds)\right)  1_{B}(r\xi V)\text{.}%
\]

\textbf{Example 7}. The class$\mathfrak{\ }$of $M$ distributions
$M(\mathbb{R})$ is a subclass of selfdecomposable distributions of
$G_{sym}(\mathbb{R})$. It is the class of symmetric infinitely divisible
distributions on $\mathbb{R}$ whose L\'{e}vy measures have polar decomposition
$(\lambda,\nu_{\xi})$ such that%
\begin{equation}
\nu_{\xi}\left(  dr\right)  =\frac{g_{\xi}(r^{2})}{r}dr\text{,}
\label{gfunction}%
\end{equation}
where $g_{h_{\xi}}(r)$ is measurable in $\xi \in S$ and completely monotone in
$r\in(0,\infty)$.\bigskip

In \cite{AMR} is shown that the class$\mathfrak{\ }M(\mathbb{R})$ is
characterized by the stochastic integral representation (\ref{ir}) where
$\mu \in I_{\log}(\mathbb{R})$ and $h(t)$ is the inverse function of
$m(t)=\int_{t}^{\infty}\frac{\varphi(u)}{u}du$.

Let $\mu_{h}\in M(\mathbb{R})$ with such a representation. The random matrix
models (\ref{rmm}) for the corresponding free $M$ distributions $\Lambda
(\mu_{h})$ are%
\[
\left(  M_{h}^{d}=\int_{0}^{\infty}m^{\ast}(t)d\Psi_{t}^{d}\right)  _{d\geq
1}\quad m^{\ast}\text{ inverse function of }m\text{.}%
\]
Let $(\lambda_{h},\nu_{h_{\xi}})$ be the polar decomposition of the L\'{e}vy
measure of $\mu_{h}$ and let $g_{h_{\xi}}$ be the corresponding $g$-function
in (\ref{gfunction}) of the radial component $\nu_{h_{\xi}}$. It is proved in
\cite{AM} that
\[
g_{h_{\xi}}(r)=\int_{0}^{\infty}\varphi(r^{1/2}/s)\nu_{\xi}(ds)\text{,}%
\]
where $\nu_{\xi}$ is the radial component of $\nu_{\mu}$. Therefore the
L\'{e}vy measures for these random matrix models for the free $M$
distributions $\Lambda(\mu_{h})$ can written as%
\[
\nu_{M_{h}^{d}}(B)=d\int_{\mathbb{\tilde{S}}_{\mathbb{M}_{d}}^{+}}\omega
_{d}\left(  dV\right)  \int_{S}\lambda_{h}\left(  d\xi \right)  \int
_{0}^{\infty}dr\left(  \int_{0}^{\infty}\varphi(r/s)\nu_{\xi}(ds)\right)
1_{B}(r\xi V)\text{.}%
\]

\textbf{Example 8.} The class of Jurek distributions $U(\mathbb{R})$ is the
class of infinitely divisible distributions on $\mathbb{R}$ whose L\'{e}vy
measures have polar decomposition $(\lambda,\nu_{\xi})$ such that
\begin{equation}
\nu_{\xi}\left(  dr\right)  =l_{\xi}(r)dr\text{,} \label{lfunction}%
\end{equation}
where $l_{\xi}$ is measurable in $\xi \in S$ and decreasing in $r\in(0,\infty)$.

It is shown in \cite{J} that this class is characterized by the stochastic
integral representation (\ref{ir}) where $\mu \in I(\mathbb{R})$ and
$h(t)=1_{\left[  0,1\right]  }(t)t$.

Let $\mu_{h}\in U(\mathbb{R})$ with such a representation. The corresponding
random matrix models (\ref{rmm}) for the free Jurek distributions $\Lambda
(\mu_{h})$ are of the form%
\[
\left(  M_{h}^{d}=\int_{0}^{1}td\Psi_{t}^{d}\right)  _{d\geq1}\text{.}%
\]

Let $(\lambda,\nu_{\xi})$ and $(\lambda_{h},\nu_{h_{\xi}})$ be the polar
decompositions of the L\'{e}vy measures of $\mu$ and $\mu_{h}$, respectively.
It is proved in \cite{AM} that the corresponding $l$-function in
(\ref{lfunction}) for the radial component $\nu_{h_{\xi}}$ is expressed as
$l_{h_{\xi}}(r)=\int_{r}^{\infty}x^{-1}\nu_{\xi}(dx)$. Therefore the L\'{e}vy
measures of these random matrix models are expressed as%
\[
\nu_{M_{h}^{d}}(B)=d\int_{\mathbb{\tilde{S}}_{\mathbb{M}_{d}}^{+}}\omega
_{d}\left(  dV\right)  \int_{S}\lambda_{h}\left(  d\xi \right)  \int
_{0}^{\infty}dr\left(  \int_{r}^{\infty}x^{-1}\nu_{\xi}(dx)\right)  1_{B}(r\xi
V)\text{.}%
\]

\textbf{Example 9.} The class of $A$ distributions $A(\mathbb{R})$ introduced
in \cite{MPAS} is bigger than the Jurek class $U(\mathbb{R})$. It is the class
of infinitely divisible distributions on $\mathbb{R}$ with L\'{e}vy measures
$\nu$ of the form%
\[
\nu(B)=\int_{\mathbb{R}\text{%
$\backslash$%
}\left \{  0\right \}  }\rho \left(  dx\right)  \int_{0}^{\infty}a_{1}\left(
r;\left \vert x\right \vert \right)  1_{B}\left(  r\frac{x}{\left \vert
x\right \vert }\right)  dr\qquad B\in \mathcal{B}\left(  \mathbb{R}\right)
\text{,}%
\]
where $\rho$ is a L\'{e}vy measure on $\mathbb{R}$; and $a_{1}\left(
r;s\right)  $ is the one-sided arcsine density with parameter $s>0$, that is,%
\[
a_{1}\left(  r;s\right)  =\left \{
\begin{array}
[c]{cc}%
2\pi^{-1}\left(  s-r^{2}\right)  ^{-1/2} & 0<r<s^{1/2}\\
0 & \text{otherwise.}%
\end{array}
\right.
\]
It is shown in \cite{MPAS} that the class $A(\mathbb{R})$ is characterized by
the stochastic integral representation (\ref{ir}) where $\mu \in I(\mathbb{R})$
and $h(t)=1_{\left[  0,1\right]  }(t)\cos \left(  \frac{\pi}{2}t\right)  $.

Let $\mu_{h}\in A(\mathbb{R})$ with such a representation. In this case the
random matrix models (\ref{rmm}) for the corresponding free $A$ distributions
$\Lambda(\mu_{h})$ are%
\[
\left(  M_{h}^{d}=\int_{0}^{1}\cos \left(  \frac{\pi}{2}t\right)  d\Psi_{t}%
^{d}\right)  _{d\geq1}%
\]
with L\'{e}vy measures, see (\ref{Levymasurermm}),
\[
\nu_{M_{h}^{d}}\left(  B\right)  =d\int_{\mathbb{\tilde{S}}_{\mathbb{M}_{d}%
}^{+}}\omega_{d}\left(  dV\right)  \int_{\mathbb{R}}\nu_{\mu}\left(
dx\right)  \int_{0}^{1}1_{B}\left(  xV\cos \left(  \frac{\pi}{2}t\right)
\right)  dt\text{.}%
\]

\textbf{Example 10.} The class of Type G distributions $G_{sym}(\mathbb{R})$
is also characterized (see Example $6$) by the stochastic integral
representation (\ref{ir}) where $\mu \in A\left(  \mathbb{R}\right)  $ and
$h(t)=1_{\left(  0,1/2\right)  }(t)\left(  \log \frac{1}{t}\right)  ^{1/2}$,
see \cite{MPAS}.\newline Let $\mu_{h}\in G_{sym}(\mathbb{R})$ has this
alternative representation, that is%
\begin{equation}
\mu_{h}=\mathcal{L}\left(  \int_{0}^{1/2}\left(  \log \frac{1}{t}\right)
^{1/2}dX_{t}^{\left(  \mu \right)  }\right)  \quad \mu \in A\left(
\mathbb{R}\right)  \text{.} \label{sir typeg}%
\end{equation}
The corresponding random matrix models (\ref{rmm}) for the free Type G
distributions $\Lambda(\mu_{h})$ are given by%
\[
\left(  M_{h}^{d}=\int_{0}^{1/2}\left(  \log \frac{1}{t}\right)  ^{1/2}%
d\Psi_{t}^{d}\right)  _{d\geq1}%
\]
with L\'{e}vy measures%
\[
\nu_{M_{h}^{d}}\left(  B\right)  =d\int_{\mathbb{\tilde{S}}_{\mathbb{M}_{d}%
}^{+}}\omega_{d}\left(  dV\right)  \int_{\mathbb{R}}\nu_{\mu}\left(
dx\right)  \int_{0}^{1/2}1_{B}\left(  xV\left(  \log \frac{1}{t}\right)
^{1/2}\right)  dt\text{.}%
\]

\textbf{Example 11}. The class of generalized Type $G$ distributions
$G(\mathbb{R})$ introduced in \cite{MS} is the class of infinitely divisible
distributions on $\mathbb{R}$ whose L\'{e}vy measures have polar decomposition
$(\lambda,\nu_{\xi})$ such that
\[
\nu_{\xi}(dr)=g_{\xi}(r^{2})dr\text{,}%
\]
where $g_{\xi}(r)$ is measurable in $\xi \in S$ and completely monotone in
$r\in(0,\infty)$. Distributions of this class are not necessarily symmetric.
If $\mu \in G(\mathbb{R})$ is symmetric then $\mu \in G_{sym}(\mathbb{R})$, that
is $\mu$ is of Type $G$.

It is proved in \cite{MPAS} that the class of generalized type $G$
distributions is characterized by the stochastic integral representation
(\ref{ir}) where $\mu \in A\left(  \mathbb{R}\right)  $ and $h(t)=1_{\left(
0,1\right)  }(t)\left(  \log \frac{1}{t}\right)  ^{1/2}$.

Let $\mu_{h}\in G(\mathbb{R})$ has such a representation, that is%
\begin{equation}
\mu_{h}=\mathcal{L}\left(  \int_{0}^{1}\left(  \log \frac{1}{t}\right)
^{1/2}dX_{t}^{\left(  \mu \right)  }\right)  \quad \mu \in A\left(
\mathbb{R}\right)  \text{.} \label{sir genertypeg}%
\end{equation}
This representation (\ref{sir genertypeg}) is not necessarily a symmetric
generalization of (\ref{sir typeg}).

In this case the corresponding random matrix models (\ref{rmm}) for the free
generalized type $G$ distributions $\Lambda(\mu_{h})$ are%
\[
\left(  M_{h}^{d}=\int_{0}^{1}\left(  \log \frac{1}{t}\right)  ^{1/2}d\Psi
_{t}^{d}\right)  _{d\geq1}\text{.}%
\]

\end{document}